\documentclass[11pt]{llncs}
\usepackage[letterpaper,hmargin=1.1in,vmargin=1.15in]{geometry}
\usepackage{amsmath,amssymb,amsbsy,amsfonts,latexsym,amsopn,amstext,amsxtra,euscript,amscd,color}
\pagestyle{headings}

\usepackage{array}
\usepackage{booktabs}
\usepackage{graphicx}
\usepackage{slashbox}
\usepackage{psfrag}
\usepackage{epsfig}
\usepackage[sort&compress,numbers,square]{}
%\setcitestyle{plain}

\def\00{{\bf 0}}
\def\+{\oplus}

\def\\{\cr}
\def\({\left(}
\def\){\right)}

\newcommand{\BBR}{\mathbb{R}}
\newcommand{\BBN}{\mathbb{N}}
\newcommand{\BBC}{\mathbb{C}}

\providecommand{\newoperator}[3]{%
  \newcommand*{#1}{\mathop{#2}#3}}

\newoperator{\FD}{\mathrm{FD}}{\nolimits}

\begin{document}
\title{ Homotopy perturbation transform method for solving fractional partial differential equations with proportional delay}
\author{Brajesh Kumar Singh\thanks{Address for Correspondence: Department of Applied Mathematics, Babasaheb Bhimrao Ambedkar University Lucknow-226 025 (UP) INDIA}, Pramod Kumar}
\institute{Department of Applied Mathematics, School for Physical Sciences, \\ Babasaheb Bhimrao Ambedkar University Lucknow-226 025 (UP)  INDIA \\
\email{bksingh0584@gmail.com, bbaupramod@gmail.com}}
\date{\today}

\maketitle
\begin{abstract}
This paper deals the implementation of \emph{homotopy perturbation transform method} (HPTM) for numerical computation of initial valued autonomous system of time-fractional partial differential equations (TFPDEs) with proportional delay, including generalized Burgers equations with proportional delay. The HPTM is a hybrid of Laplace transform and homotopy perturbation method. To confirm the efficiency and validity of the method, the computation of three test problems of TFPDEs with proportional delay presented. The proposed solutions are obtained in series form, converges very fast. The scheme seems very reliable, effective and efficient powerful technique for solving various type of physical models arising in sciences and engineering.
\end{abstract}

\noindent {\bf Keywords:} Homotopy perturbation transform method; fractional derivative, in Caputo sense; autonomous differential equations; fractional partial deferential equation with proportional delay; generalized Burgers equations with proportional delay

\noindent \textbf{Mathematics Subject Classification (2010):}{  35R11 $\cdot$ 49Mxx $\cdot$ 65H20 $\cdot$ 60Hxx}

\section{Introduction}
Fractional differential equation have achieved great attention among researchers due to its wide range of applications in
various meaningful phenomena in fluid mechanics, electrical networks, signal processing, diffusion, reaction processes and other fields of science and engineering \cite{MR93,Kilbas06,CM71}, among them, the non-linear oscillation of earthquake can be modeled with fractional derivatives  \cite{He98a}, the fluid-dynamic traffic model having fractional derivatives \cite{He99a} can eliminate the deficiency arising from the assumption of continuum traffic flow, fractional non linear complex model for seepage flow in porous media in \cite{He98}.

Indeed, it is too tough to find an exact solution of a wide class of the differential equation. Keeping all this in mind, various type of vigorous techniques has been developed to find an approximate solution of such type of fractional differential equations, among others,  generalized differential transform method \cite{LH11},  variational iteration method \cite{SE15}, local fractional variational iteration method \cite{YBKM14}, reproducing kernel Hilbert space method \cite{GC12}, Adomian decomposition method \cite{MO06} and homotopy analysis method \cite{SE13}, Fractional reduced differential transform method \cite{SAS14a,SKAS14,SS15,SM16}.

In the recent, vigourous techniques with Laplace transform has been developed, among them, see \cite{KKAR14,GKO13,KH11,KG12,KGB13,KGK12,KGHV12,KKA13}. Among others, HPTM has been employed for solving fractional model of Navier-Stokes equation \cite{KSK15}, optimal control problems \cite{GR16}, fractional coupled sine-Gordon equations \cite{SSahoo15}, Falkner-Skan wedge flow \cite{MAAR16}, time- and space-fractional coupled Burgers' equations \cite{SKS16}, strongly nonlinear oscillators \cite{MEA09}, nonlinear boundary values problems \cite{NINO16,KW11}, non-homogeneous
partial differential equations with a variable coefficient \cite{MFKY11}. The reader also refer to read \cite{SUE16}.

The partial functional differential equations with proportional delays, a special class of delay partial differential equation, arises specially in the field of biology, medicine, population ecology, control systems and climate models \cite{Wu96}, and complex economic macrodynamics \cite{Keller10}.

In this paper, we obtain the numerical solution of initial valued autonomous system of TFPDEs with proportional delay \cite{SUE16} defined by
\begin{equation*}
\left\{ \begin{array}{ll}
\mathcal{D}_t^{\alpha} \(u(x, t)\) =  f\left(x, u(a_0 x, b_0 t), \frac{\partial }{\partial x}u(a_1 x, b_1 t), \ldots, \frac{\partial^m }{\partial x^m}u(a_m x, b_m t) \right), \\
u^{k}(x, 0)=\psi_k(x).
\end{array}  \right.
\end{equation*}
where $a_i, b_i \in (0, 1)$ for all $i\in N \cup \{0\}$. $g_k$ is initial value and $f$ is the differential operator, the independent variables $(x,t)$ (where $t$ denote time, and $x$-space variable) generally denotes the position in space or size of cells, maturation level, at a time while its solution may be the voltage, temperature, densities of different particles, form instance, chemicals, cells, etc.. One significant example of the model: Korteweg-de Vries (KdV) equation, arise in the research of shallow water waves is as follow:
\begin{equation*}
\mathcal{D}_t^{\alpha} \(u(x, t)\) =  b u \frac{\partial }{\partial x} u(a_0x, b_0 t)+ \frac{\partial^3 }{\partial x^3} u(a_1 x, b_1 t), \quad 0< \alpha < 1.
\end{equation*}
where $b$ is a constant. The another well-known model: time-fractional nonlinear Klein--Gordon equation with proportional delay, aries in quantum field theory to describe nonlinear wave interaction
\begin{equation*}
\mathcal{D}_t^{\alpha} \(u(x, t)\) =  u \frac{\partial^2 }{\partial x^2} u(a_0x, b_0 t)- b u(a_1 x, b_1 t)-F(u(a_2 x, b_2 t))+ h(x, t), \quad 1< \alpha < 2.
\end{equation*}
where $b$ is a constant, $h(x, t)$ known analytic function and $F$ is the nonlinear operator of $u(x, t)$.  For details of various type of models, we refer the reader to \cite{SUE16,Wu96} and the references therein.

To best of my knowledge, a little literature of numerical techniques to solve TFPDE with delay, among them, Chebyshev pseudospectral method for linear differential and differential-functional parabolic equations by Zubik-Kowal \cite{ZK2000}, spectral collocation $\&$ waveform relaxation methods by Zubik-Kowal and Jackiewicz \cite{ZJ06} and iterated pseudospectral method \cite{MZ05} for nonlinear delay partial differential equations,  two dimensional differential transform method (2D-DTM) and RDTM for partial differential equations with proportional delay by Abazari and Ganji \cite{AG11}, Abazari and Kilicman \cite{AK11} used DTM for nonlinear integro-differential equations with proportional delay, group analysis method for nonhomogeneous  mucilaginous Burgers equation with proportional delay due to Tanthanuch \cite{T12}, homotopy perturbation method for TFPDE with proportional delay by Sakar et al.\cite{SUE16} and Shakeri-Dehghan \cite{SD08}, and Biazar ad Ghanbari \cite{BG12}, variational iteration method (VIM) for solving a neutral functional-differential equation with proportional delays by Chena and Wang \cite{CW10}, functional constraints method for the exact solutions of nonlinear delay reaction-diffusion equations by Polyanin and Zhurov \cite{PZ14}, and so on.

In this paper, our main goal is to propose an alternative numerical solution of initial valued autonomous system of time fractional partial differential equation with proportional delay \cite{SUE16}. The paper is organized into five more sections, which follow this introduction. Specifically, Section \ref{sec-fracks} deals with revisit of fractional calculus.  Section \ref{sec-impliment} is devoted to the procedure for the implementation of  the HPTM for the problem \eqref{eqn-PD}.  Section \ref{sec-numeric-stdy} is concerned with three test problems with the main aim to establish the convergency and effectiveness of the HPTM.  Finally, Section \ref{sec-conclu} concludes the paper with reference to
critical analysis and research perspectives.

\section{Preliminaries} \label{sec-fracks}
This section revisit some basic definitions of definitions of fractional calculus due to Liouville \cite{MR93}, which we need to complete the paper
\begin{definition}
Let $ \mu  \in \BBR$ and $m \in \BBN$. A function $f: \BBR^{+} \to \BBR$ belongs to $\BBC_{\mu}$ if there exists $k \in \BBR$, $k > \mu$ and $g \in C[0, \infty)$ such that $f(x)=x^k g(x)$, $ \forall x \in \BBR^+$. Moreover,  $f\in\BBC_{\mu}^m$  if
 $f^{(m)}\in \BBC_{\mu}$.
\end{definition}
\begin{definition} Let $\mathcal{J}_t^\alpha~ (\alpha \geq 0)$ be Riemann-Liouville fractional integral operator and let $f \in \BBC_{\mu}$, then
%\begin{equation}\label{eqn-d1}
\begin{flushleft}
\begin{description}
  \item[(*)]  $ \mathcal{\mathcal{J}}_t^{\alpha}  f\left(t \right){\rm{ = }}\frac{{\rm{1}}}{{\Gamma \left( \alpha  \right)}}\int\limits_{\rm{0}}^{\rm{t}} {\left( {{\rm{t - \tau}}} \right)^{\alpha  - 1} } f\left( \tau \right){\rm{d\tau,}} ~~  \mbox { if } \alpha >0, $
  \item[(**)]$\mathcal{J}_t^0  f\left(t \right){\rm{ = }} f(t)$, \quad where $\Gamma \left( z \right) := \int\limits_0^\infty  {e^{ - t} t^{z - 1} dt,z \in \BBC} $.
\end{description}
\end{flushleft}
%\end{equation}
\end{definition}
For $f \in \BBC_{\mu}, \mu \geq -1, \alpha, \beta \geq 0 $ and $\gamma >-1$, the operator $\mathcal{J}_t^{\alpha}$ satisfy the following properties
\begin{itemize}
  \item [i)]$\mathcal{J}_t^{\alpha} \mathcal{J}_t^{\beta} f(x) = \mathcal{J}_t^{\alpha+\beta} f(x)=J_t^{\beta} \mathcal{J}_t^{\alpha} f(x),$ \quad $\textbf{ ii)}$ $\mathcal{J}_t^{\alpha} x^{\gamma} = \frac{\Gamma {(1+\gamma)}}{\Gamma {(1+\gamma+\alpha)}}x^{\alpha+\gamma}.$
\end{itemize}
The Caputo fractional differentiation operator $\mathcal{D}^\alpha_{t}$ defined as follows:
\begin{definition} Let $f \in \BBC_{\mu}$, $\mu \geq -1$ and $m - 1 < \alpha  \le m, {\rm{ }}m \in \BBN$. Then
\begin{equation}\label{eqn-d2}
\mathcal{D}_t^\alpha  f\left( t \right){\rm{ = }}\mathcal{J}_t^{m - \alpha } \mathcal{D}_t^m f\left( t \right) = \frac{{\rm{1}}}{{\Gamma \left( {m - \alpha } \right)}}\int\limits_{\rm{0}}^{\rm{t}} {\left( {{\rm{t - \tau}}} \right)^{m - \alpha  - 1} } f^{\left( m \right)} \left( \tau \right){\rm{d\tau,}}
\end{equation}
\end{definition}
Moreover, the operator $\mathcal{D}_t^\alpha$ satisfy following the basic properties
\begin{lemma} \label{lem-caput} Let $m -1 < \alpha \le m,{\rm{ }}m \in \BBN,$ and $f \in \BBC_\mu ^m ,{\rm{ }}\mu \ge {\rm{ - 1}}$, and $\gamma > \alpha -1$, then
%\begin{equation}\label{eqn-d3}
%\left\{
\begin{flushleft}
$\begin{array}{ll}
(a) \mathcal{D}_t^{\alpha}\mathcal{D}_t^{\beta} f(x) = \mathcal{D}_t^{\alpha+\beta} f(x); \qquad \quad (b)~~\mathcal{D}_t^{\alpha} x^{\gamma} = \frac{\Gamma {(1+\gamma)}}{\Gamma {(1+\gamma-\alpha)}}x^{\gamma-\alpha},\\
(c)~ \mathcal{D}_t^\alpha  \mathcal{J}_t^\alpha  f\left( t \right){\rm{ = }}f\left( t \right), (d) ~ \mathcal{J}_t^\alpha  \mathcal{D}_t^\alpha  f\left( t \right){\rm{ = }}f\left( t \right) - \sum\limits_{k = 0}^m {f^{\left( k \right)} \left( {0^ +  } \right)\frac{{t^k }}{{k!}}} ,& \mbox{ for }~~ {\rm{ t > 0.}} \\
 \end{array}$ %\right.
\end{flushleft}
%\end{equation}
\end{lemma}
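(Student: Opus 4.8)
\medskip
\noindent\emph{Proof idea.} The plan is to reduce all four identities to the defining relation $\mathcal{D}_t^\alpha=\mathcal{J}_t^{m-\alpha}\mathcal{D}_t^m$ of~\eqref{eqn-d2}, the two recorded properties of the Riemann--Liouville integral (the semigroup law $\mathcal{J}_t^\alpha\mathcal{J}_t^\beta=\mathcal{J}_t^{\alpha+\beta}$ and the power rule $\mathcal{J}_t^\alpha t^\gamma=\Gamma(1+\gamma)\,t^{\alpha+\gamma}/\Gamma(1+\gamma+\alpha)$), and three classical facts about the integer-order operators on $\BBC_\mu^m$: the iterated fundamental theorem $\mathcal{D}_t^k\mathcal{J}_t^k g=g$, Taylor's formula with integral remainder $\mathcal{J}_t^m\mathcal{D}_t^m f=f-\sum_{k=0}^{m-1}f^{(k)}(0^+)t^k/k!$, and the Leibniz-type identity $\mathcal{D}_t^1\mathcal{J}_t^{\gamma}g=\mathcal{J}_t^{\gamma}\mathcal{D}_t^1 g+g(0^+)t^{\gamma-1}/\Gamma(\gamma)$ for $\gamma>0$ (integration by parts followed by differentiation). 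Given these, parts (b)--(d) are routine bookkeeping, and I expect part~(a) to be the only genuinely delicate point.

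For (b), take $f(t)=t^\gamma$ with $\gamma>\alpha-1$: differentiating $m$ times gives $\mathcal{D}_t^m t^\gamma=\Gamma(1+\gamma)\,t^{\gamma-m}/\Gamma(1+\gamma-m)$, and applying $\mathcal{J}_t^{m-\alpha}$ by the power rule cancels the factors $\Gamma(1+\gamma-m)$ and leaves $\Gamma(1+\gamma)\,t^{\gamma-\alpha}/\Gamma(1+\gamma-\alpha)$, the hypothesis $\gamma>\alpha-1$ being exactly what makes the intermediate fractional integral converge. For (c), write $\mathcal{D}_t^\alpha\mathcal{J}_t^\alpha f=\mathcal{J}_t^{m-\alpha}\mathcal{D}_t^m\mathcal{J}_t^\alpha f$; since $m-1<\alpha$, the semigroup law factors $\mathcal{J}_t^\alpha f=\mathcal{J}_t^{m-1}(\mathcal{J}_t^{\beta}f)$ with $\beta:=\alpha-m+1\in(0,1]$ and both exponents nonnegative, so $\mathcal{D}_t^m\mathcal{J}_t^\alpha f=\mathcal{D}_t^1\mathcal{D}_t^{m-1}\mathcal{J}_t^{m-1}(\mathcal{J}_t^{\beta}f)=\mathcal{D}_t^1\mathcal{J}_t^{\beta}f$. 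Now apply the Leibniz identity and then $\mathcal{J}_t^{m-\alpha}=\mathcal{J}_t^{1-\beta}$: the main term becomes $\mathcal{J}_t^1\mathcal{D}_t^1 f=f-f(0^+)$, while the boundary term becomes $f(0^+)\,\mathcal{J}_t^{1-\beta}(t^{\beta-1}/\Gamma(\beta))=f(0^+)$ by the power rule; the two copies of $f(0^+)$ cancel and leave $f$.

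For (d), use the semigroup law in the opposite order, $\mathcal{J}_t^\alpha\mathcal{D}_t^\alpha f=\mathcal{J}_t^\alpha\mathcal{J}_t^{m-\alpha}\mathcal{D}_t^m f=\mathcal{J}_t^m\mathcal{D}_t^m f$, and then invoke Taylor's formula with integral remainder to obtain $f-\sum_{k=0}^{m-1}f^{(k)}(0^+)t^k/k!$ for $t>0$. (The summation should stop at $k=m-1$; the upper limit $m$ written in the statement only matters in the borderline case $\alpha=m$, where the term $k=m$ must be dropped to recover the correct Caputo initialization.)

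Part~(a), the fractional semigroup law $\mathcal{D}_t^\alpha\mathcal{D}_t^\beta f=\mathcal{D}_t^{\alpha+\beta}f$, is where I expect the real work. Writing $m_1=\lceil\alpha\rceil$ and $m_2=\lceil\beta\rceil$ and unfolding the definitions, the identity amounts to the claim that $\mathcal{D}_t^{m_1}$ may be pulled through the fractional integral $\mathcal{J}_t^{m_2-\beta}$ when it acts on $\mathcal{D}_t^{m_2}f$, that is $\mathcal{D}_t^{m_1}\mathcal{J}_t^{m_2-\beta}\mathcal{D}_t^{m_2}f=\mathcal{J}_t^{m_2-\beta}\mathcal{D}_t^{m_1+m_2}f$. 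Iterating the Leibniz identity (together with the power rule, to evaluate the resulting integrals of powers of $t$) shows that the two sides differ by a finite sum of boundary terms of the form $f^{(j)}(0^+)\,t^{j-\alpha-\beta}/\Gamma(j-\alpha-\beta+1)$ with $\lceil\beta\rceil\le j\le\lceil\alpha+\beta\rceil-1$. This net difference vanishes identically precisely when $\lceil\alpha+\beta\rceil=\lceil\beta\rceil$ (in particular when $\alpha,\beta,\alpha+\beta\in(0,1)$); otherwise one must add the compatibility conditions $f^{(j)}(0^+)=0$ for $j$ in that range (and require $f$ smooth enough for $\mathcal{D}_t^{m_1+m_2}f$ to exist), under which the boundary contributions cancel exactly as they did in~(c). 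Accordingly, I would either strengthen the statement of~(a) by these vanishing conditions or restrict the admissible orders so that the conditions are automatic.
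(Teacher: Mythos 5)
The paper offers no proof of this lemma at all: it is quoted as a standard fact and delegated to the references on fractional calculus, so there is nothing internal to compare your argument against. Your derivation is essentially the textbook one and is sound where the statement itself is sound: (b) is the power rule composed with $\mathcal{J}_t^{m-\alpha}$ (though note the integral $\mathcal{J}_t^{m-\alpha}t^{\gamma-m}$ actually needs $\gamma>m-1$, not merely $\gamma>\alpha-1$; this imprecision is inherited from the statement, not introduced by you); (c) via the splitting $\mathcal{J}_t^{\alpha}=\mathcal{J}_t^{m-1}\mathcal{J}_t^{\beta}$ and the integration-by-parts identity is correct, and is legitimate here because the hypothesis $f\in\BBC_\mu^m$ supplies the differentiability your Leibniz step uses; (d) is exactly the standard reduction to Taylor's formula with integral remainder. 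Your two criticisms of the statement are also well taken and worth recording: the upper limit in (d) should be $m-1$ rather than $m$, and (a) is simply false for the Caputo derivative without extra hypotheses --- your boundary-term computation is confirmed by the example $f(t)=t$, $\alpha=\beta=0.7$, where $\mathcal{D}_t^{0.7}\mathcal{D}_t^{0.7}t=t^{-0.4}/\Gamma(0.6)$ while $\mathcal{D}_t^{1.4}t=0$, the discrepancy being precisely your $j=1$ term. So your write-up does more than the paper does: it supplies proofs of (b)--(d) and correctly identifies that (a), as printed, needs either the vanishing conditions $f^{(j)}(0^+)=0$ for $\lceil\beta\rceil\le j\le\lceil\alpha+\beta\rceil-1$ or a restriction on the orders. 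Since none of parts (a)--(d) is actually invoked elsewhere in the paper's computations (which only use the Laplace-transform formula for $\mathcal{D}_t^\alpha$), these corrections do not propagate, but the lemma should be amended accordingly.
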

For more details on fractional derivatives, one can refer \cite{MR93,Kilbas06,CM71}.

\begin{definition} The Laplace transform of a piecewise continuous function $u(t)$ in $(0, \infty)$ is defined by
\begin{equation}\label{eqn-d03}
\mathcal{U}(s)=\mathcal{L}\{u(t)\}=\int_{0}^{\infty} u(t)\exp(-st)dt,
\end{equation}
where $s$ is a parameter. Moreover, for the Caputo derivative $\mathcal{D}_t^\alpha u\left( t \right)$ and Riemann--
Liouville fractional integral $ \mathcal{J}_t^\alpha u\left( t \right)$ of a function $u \in \BBC_{\mu} ~~(\mu\geq-1)$, the laplace transform \cite{MR93,Kilbas06} is defined as
\begin{equation}\label{eqn-d03}
\begin{split}
& \mathcal{L}\{\mathcal{J}_t^\alpha u\left(t\right)\}=s^{- \alpha} \mathcal{U}(s),\\
& \mathcal{L}\{\mathcal{D}_t^\alpha f\left(t\right)\}=s^{\alpha} \mathcal{U}(s) -\sum_{r=0}^{m-1}s^{\alpha-r-1}u^{(r)}(0+), \quad (m-1 < \alpha \leq m)\\
\end{split}
\end{equation}
\end{definition}
\section{Implementation: HPTM for TFPDEs with proportional delay }\label{sec-impliment}
\noindent This section describes the implementation of HPTM to the initial valued autonomous system of
TFPDEs with proportional delay, defined as below:
\begin{equation}\label{eqn-PD}
\left\{ \begin{array}{ll}
\mathcal{D}_t^{\alpha} \(u(x, t)\) =  f\left(x, u(a_0 x, b_0 t), \frac{\partial }{\partial x}u(a_1 x, b_1 t), \ldots, \frac{\partial^m }{\partial x^m}u(a_m x, b_m t) \right), \\
u(x, 0)=\psi(x).
\end{array}  \right.
\end{equation}
Taking Laplace transform on Eq. \eqref{eqn-PD}, we get
\begin{equation}\label{eqn-RDT-SG-LT}
 \mathcal{U}(x, s) = \frac{u(x, 0)}{s}  +\frac{1}{s^{\alpha}} \mathcal{ L} \left[ f\left(x, u(a_0 x, b_0 t), \frac{\partial }{\partial x}u(a_1 x, b_1 t), \ldots, \frac{\partial^m }{\partial x^m}u(a_m x, b_m t) \right)\right]
\end{equation}
Inverse Laplace transform of Eq.\eqref{eqn-RDT-SG-LT} leads
 \begin{equation}\label{eqn-RDT-SG-ILT}
u(x, t) = \psi(x) + \mathcal{L}^{-1}\left[\frac{1}{s^\alpha} \mathcal{L}\left[ f\left(x,u(a_0 x, b_0 t), \frac{\partial }{\partial x}u(a_1 x, b_1 t), \ldots, \frac{\partial^m }{\partial x^m}u(a_m x, b_m t) \right)\right]\right],
\end{equation}
where $\psi(x)$ denotes source term, usually the recommended initial conditions.

Let us assume from homotopy perturbation method that the basic solution of Eq. \eqref{eqn-RDT-SG-LT} in a power series:
\begin{equation}\label{eqn-basic}
 u^*(x,t)=\sum_{\imath =0}^{\infty}p^\imath u_\imath(x,t).
\end{equation}
From Eq. \eqref{eqn-basic} and \eqref{eqn-RDT-SG-ILT}, we get
 \begin{equation}\label{eqn-RDT-SG-ILT-hpm1}
 \begin{split}
&\sum_{r=0}^{\infty}p^r u_r(x,t)=u(x,0)+\\
&p \left[\mathcal{L}^{-1}\left\{\frac{1}{s^\alpha} \mathcal{L}\left\{ f\left(x, t,\sum_{r=0}^{\infty} u_r(a_0 x, b_0 t), \frac{\partial }{\partial x}\sum_{r=0}^{\infty} u_r(a_1 x, b_1 t), \ldots \frac{\partial^m }{\partial x^m} \sum_{r=0}^{\infty}u_r(a_m x, b_m t) \right)\right\} \right\} \right]
\end{split}
\end{equation}
On equating like powers of $p$, we get
\begin{equation} \label{eqn-RDT-SG-ILT-hpm11}
\left.\begin{split}
& p^0 : u_0(x,t)=\psi(x)\\
&p^1 :u_1(x,t)=\mathcal{L}^{-1}\left[\frac{1}{s^\alpha} \mathcal{L}\left[ f\left(x, t, u_0(a_0 x, b_0 t), \frac{\partial }{\partial x}u_0(a_1 x, b_1 t), \ldots, \frac{\partial^m }{\partial x^m}u_0(a_m x, b_m t) \right)\right]\right]\\
& \qquad \qquad \qquad  \vdots  \qquad \qquad \qquad  \qquad \vdots
 \end{split} \right\}
\end{equation}
For $p=1$, an approximate solution is given by
\begin{equation}\label{eqn-approx}
 u(x,t)=\sum_{\imath =0}^{\infty} u_\imath(x,t).
\end{equation}
\subsection{Convergence analysis and error estimate}\label{sec-conver.}
This sections studies the convergence of the HPTM solution and the error estimate.
\begin{theorem}\label{conv}
Let $0< \gamma <1$ and let $u_n(x, t), u(x, t)$ are in Banach space $(\mathcal{C}[0, 1], ||\cdot||)$. Then the series solution
$\sum_{n=0}^{\infty} u_n(x, t)$ from the sequence $\{u_n(x)\}_{n=0}^\infty$ converges to the solution of Eq. \eqref{eqn-PD} whenever $u_n(x)\leq \gamma u_{n-1)}(x)$ for all $n\in N$.

Moreover, the maximum absolute truncation error of the series solution \eqref{eqn-approx} for  Eq. \eqref{eqn-PD} is computed as
\begin{equation}\label{eqn-max-error}
 \left|\left|u(x,t)-\sum_{\imath =0}^{\ell} u_\imath(x,t)\right|\right|\leq \frac{\gamma^{\ell+1}}{1-\gamma}||u_0(x,t)||<\frac{||u_0(x,t)||}{1-\gamma}.
\end{equation}
\end{theorem}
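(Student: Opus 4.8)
The plan is to treat this as a standard geometric-majorization argument for the sequence of partial sums in the Banach space $(\mathcal{C}[0,1],\|\cdot\|)$. Write $S_\ell(x,t)=\sum_{\imath=0}^{\ell}u_\imath(x,t)$ for the $\ell$-th partial sum. The first step is to iterate the hypothesis $\|u_n\|\le\gamma\|u_{n-1}\|$: by a trivial induction on $n$ this gives $\|u_n\|\le\gamma^{n}\|u_0\|$ for every $n\in\mathbb{N}\cup\{0\}$. This single estimate is the engine for everything that follows.

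Next I would show $\{S_\ell\}_{\ell\ge0}$ is a Cauchy sequence. For $m>\ell$, the triangle inequality together with the bound just obtained yields
\begin{equation*}
\|S_m-S_\ell\|=\Bigl\|\sum_{\imath=\ell+1}^{m}u_\imath\Bigr\|\le\sum_{\imath=\ell+1}^{m}\|u_\imath\|\le\sum_{\imath=\ell+1}^{m}\gamma^{\imath}\|u_0\|\le\frac{\gamma^{\ell+1}}{1-\gamma}\|u_0\|.
\end{equation*}
Since $0<\gamma<1$, the right-hand side tends to $0$ as $\ell\to\infty$, so $\{S_\ell\}$ is Cauchy; completeness of $\mathcal{C}[0,1]$ then furnishes a limit $u(x,t)=\sum_{\imath=0}^{\infty}u_\imath(x,t)\in\mathcal{C}[0,1]$. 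To identify this limit as a solution of \eqref{eqn-PD}, I would pass to the limit in the HPTM recursion \eqref{eqn-RDT-SG-ILT-hpm11}: summing the relations over $\imath$ and using linearity and continuity of $\mathcal{L}$ and $\mathcal{L}^{-1}$ (and continuity of $f$ in its arguments) shows that $u$ satisfies the fixed-point form \eqref{eqn-RDT-SG-ILT}, which by Lemma \ref{lem-caput}(c)–(d) is equivalent to \eqref{eqn-PD}.

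For the truncation-error bound, I would let $m\to\infty$ in the displayed inequality above with $\ell$ fixed, obtaining
\begin{equation*}
\Bigl\|u(x,t)-\sum_{\imath=0}^{\ell}u_\imath(x,t)\Bigr\|=\Bigl\|\sum_{\imath=\ell+1}^{\infty}u_\imath\Bigr\|\le\sum_{\imath=\ell+1}^{\infty}\gamma^{\imath}\|u_0\|=\frac{\gamma^{\ell+1}}{1-\gamma}\|u_0(x,t)\|,
\end{equation*}
and the final strict inequality in \eqref{eqn-max-error} follows since $\gamma^{\ell+1}<1$ for all $\ell\ge0$. I expect the only genuinely delicate point to be the justification that the limit function actually solves \eqref{eqn-PD} rather than merely existing — this requires that the nonlinear operator $f$ and the transform pair interact continuously with the series, which is where one must either invoke the boundedness/continuity assumptions implicit in the function class $\mathbb{C}_\mu$ or simply note that the HPTM construction was designed so that the formal series at $p=1$ reproduces \eqref{eqn-RDT-SG-ILT}; the Cauchy estimate and the error bound themselves are routine.
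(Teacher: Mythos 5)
Your argument is correct and is essentially the proof the paper intends: the paper itself only cites \cite[Theorems 4.1, 4.2]{SUE16}, and those theorems are proved by exactly your route --- iterate $\|u_n\|\le\gamma\|u_{n-1}\|$ to get $\|u_n\|\le\gamma^n\|u_0\|$, show the partial sums are Cauchy in the Banach space, and let $m\to\infty$ in the tail estimate to obtain \eqref{eqn-max-error}. Your added remark that identifying the limit as an actual solution of \eqref{eqn-PD} is the only delicate step is fair, and your treatment of it is no less rigorous than the source the paper defers to.
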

\begin{proof} The proof is similar to \cite[Theorem 4.1, 4.2]{SUE16}.
\end{proof}
\section{Application of HPTM for TFPDEs with proportional delay}\label{sec-numeric-stdy}
In this section, the effectiveness and validity of HPTM is illustrated by three test problems of  initial valued autonomous system of TFPDEs with proportional delay.
\begin{example}\label{ex1}
Consider initial values system of time-fractional order, generalized Burgers equation with proportional delay as given in \cite{SUE16}
\begin{equation} \label{eqn-ex1}
\left\{\begin{split}
& \mathcal{D}_t^{\alpha} u(x, t) = \frac{\partial^2}{\partial x^2} u(x, t)+u\(\frac{x}{2}, \frac{t}{2}\) \frac{\partial }{\partial x}  u\left(x,\frac{t}{2}\right)+ \frac{1}{2}u(x, t) \\
& u(x, 0) = x,
\end{split} \right.
\end{equation}
Taking Laplace transformation of Eq. \eqref{eqn-ex1}, we get
\begin{equation}\label{eqn-ex1-LT}
\mathcal{U}(x, s)=\frac{x}{s} +\frac{1}{s^\alpha}\mathcal{L}\left[ \frac{\partial^2}{\partial x^2} u(x, t)+u\(\frac{x}{2}, \frac{t}{2}\) \frac{\partial }{\partial x}  u\left(x,\frac{t}{2}\right)+ \frac{1}{2}u(x, t)\right]
\end{equation}
Now, inverse Laplace transform with basic solution \eqref{eqn-basic} leads to
\begin{equation}\label{eqn-ex1-ILT-HPM}
\begin{split}
 &\sum \limits_{r = 0}^\infty  p^r u_r(x,t)=x+ \\
 &p \left[\mathcal{L}^{-1}\left\{\frac{1}{s^\alpha}\mathcal{L}\left( \frac{\partial^2}{\partial x^2} \(\sum\limits_{r = 0}^\infty u_r(x, t)\)+\sum\limits_{r = 0}^{\infty} u_r\(\frac{x}{2}, \frac{t}{2}\) \frac{\partial }{\partial x}  u_{k-r}\left(x,\frac{t}{2}\right)+ \frac{1}{2}\sum\limits_{r = 0}^\infty u_r(x, t)\right)\right\}\right]
 \end{split}
\end{equation}
On comparing the coefficient of like powers of $p$, we get
\begin{equation} \label{eqn-ex1-HPTMa}
\left.\begin{split}
p^0 :  u_0(x,t)&=x,\\
p^1:u_1(x,t)&= \mathcal{L}^{-1}\left[\frac{1}{s^\alpha}\mathcal{L}\left[ \frac{\partial^2}{\partial x^2} \( u_0(x, t)\)+ u_0\(\frac{x}{2}, \frac{t}{2}\) \frac{\partial }{\partial x}  u_0\left(x,\frac{t}{2}\right)+ \frac{1}{2}u_0(x, t)\right]\right]\\
&= \frac{x t^\alpha}{\Gamma{(1+\alpha)}} \\
p^2: u_2(x,t)&= \mathcal{L}^{-1}\left[\frac{1}{s^\alpha}\mathcal{L}\left\{ \frac{\partial^2}{\partial x^2} \( u_1(x, t)\)+\frac{1}{2}u_1(x, t)\right.\right.\\
&\left.\left.+ u_0\(\frac{x}{2}, \frac{t}{2}\) \frac{\partial }{\partial x}  u_1\left(x,\frac{t}{2}\right)+ u_1\(\frac{x}{2}, \frac{t}{2}\) \frac{\partial }{\partial x}  u_0\left(x,\frac{t}{2}\right)\right\}\right]\\
&= \frac{(1+2^{1-\alpha})x t^{2\alpha}}{2\Gamma{(1+2\alpha)}}\\
 \end{split} \right.
\end{equation}
\begin{equation} \label{eqn-ex1-HPTMb}
\left.\begin{split}
p^3:u_3(x,t)&= \mathcal{L}^{-1}\left[\frac{1}{s^\alpha}\mathcal{L}\left[ \frac{\partial^2}{\partial x^2} \( u_2(x, t)\)+ \frac{u_2(x, t)}{2} +u_2\(\frac{x}{2}, \frac{t}{2}\) \frac{\partial }{\partial x}  u_0\left(x,\frac{t}{2}\right)\right.\right.\\
&\left.\left.+ u_0\(\frac{x}{2}, \frac{t}{2}\) \frac{\partial }{\partial x}  u_2\left(x,\frac{t}{2}\right)+ u_1\(\frac{x}{2}, \frac{t}{2}\) \frac{\partial }{\partial x}  u_1\left(x,\frac{t}{2}\right)\right]\right]\\
&=\frac{x t^{3\alpha}}{4 \Gamma{(1+3\alpha)}} \left\{1+ 2^{1-\alpha} + 2^{1-2\alpha} + 2^{2-3\alpha}  + \frac{ \Gamma{(1+2\alpha)}}{\Gamma{(1+\alpha)}^2} ~~ 2^{1-2\alpha}\right\}\\
p^4:u_4(x,t)&= \mathcal{L}^{-1}\left[\frac{1}{s^\alpha}L\left[ \frac{\partial^2}{\partial x^2} u_3(x, t)+ \frac{u_3(x, t)}{2}+ u_0\(\frac{x}{2}, \frac{t}{2}\) \frac{\partial }{\partial x} u_3\left(x,\frac{t}{2}\right)+ u_1\(\frac{x}{2}, \frac{t}{2}\)\right.\right.\\
&\left.\left. \times\frac{\partial }{\partial x}  u_2\left(x,\frac{t}{2}\right) + u_2\(\frac{x}{2}, \frac{t}{2}\) \frac{\partial }{\partial x}  u_1\left(x,\frac{t}{2}\right)+u_3\(\frac{x}{2}, \frac{t}{2}\) \frac{\partial }{\partial x}  u_0\left(x,\frac{t}{2}\right)\right]\right]\\
&= \frac{{x t^{4\alpha}}}{8\Gamma{(1+4\alpha)}}\left\{1+2^{9-6\alpha}+ 2^{8-5\alpha}+3\times 2^{7-3\alpha}+2^{7-2\alpha}
+2^{7-\alpha}+2^{8-4\alpha}\right. \\
& \left.+(2^{8-5\alpha}+2^{7-2\alpha}) \frac{\Gamma{(1+2\alpha)}}{\Gamma{(1+\alpha)^2}}+(2^{9-4\alpha}+2^{8-3\alpha})\frac{\Gamma{(1+3\alpha)}}{\Gamma{(1+\alpha)}\Gamma{(1+2\alpha)}} \right\}\\
& \qquad \qquad \qquad  \vdots  \qquad \qquad \qquad  \qquad \vdots
 \end{split} \right.
\end{equation}
Therefore the solution for Eq. \eqref{eqn-ex1} is
\begin{equation}\label{eqn-ex1-HP-ILT-SOLN}
 u(x,t)=u_0(x,t)+u_1(x,t)+u_2(x,t)+u_3(x,t)+u_4(x,t)+\ldots
\end{equation}
The same solution is is obtained by Sarkar et al. \cite{SUE16}. In particular, for $\alpha=1$, the seventh order solution is
\begin{equation}\label{eqn-ex1-HP-ILT-exact-SOLN}
 u(x,t)=x\left( 1+t+\frac{t^2}{2}+\frac{t^3}{6}+\frac{t^4}{24}+\frac{t^5}{120}+\frac{t^6}{720}+\frac{t^7}{5040}\right)
\end{equation}
which is same as obtained by DTM and RDTM \cite {AG11}, and is a closed form of the exact solution $u(x, t) = x\exp(t)$. The HPTM solution for $\alpha=1$ is reported in Table \ref{tab1.1}. The surface solution behavior of $u(x,t)$ for different values of $\alpha =0.8, 0.9, 1.0$ is depicted in Fig. \ref{fig1.1}, and the plots of the solution for $x=1$ at different time intervals $t\leq 1$ is depicted in Fig \ref{fig1.11}. It is found that the results are agreed well with HPM as well as DTM solutions and approaching to the exact solution.

\begin{table}[t!]
\caption{Approximate HPTM solution of Example \ref{ex1} with first six terms for at $\alpha =1.0$}  \label{tab1.1}
\centering
\begin{tabular}{llllllllll}
\toprule
$x$	        &{}& $t$  &{}    & Exact         &{}&\multicolumn{2}{l}{HPTM}&{}&  \\ %\cline{8-9}
{}	        &{}&{}           & {} &          &{}&solution         &{}& $E_{abs}$ \\
\midrule
   $ 0.25$	\qquad \qquad &{}&$0.25$&{}    &0.321006	   &{}&0.321004     &{}&2.122401E-06\\
	        & \qquad\qquad&$0.50$\qquad  \qquad\qquad &{} 	 &0.412180  \qquad \qquad \qquad    &{}&0.412109  \qquad \qquad \qquad    &{}&7.094268E-05\\
	        &{}&$0.75$&{}    &0.529250	   &\qquad&0.528686     &\qquad&5.634807E-04\\
	        &{}&$1.00$&{}    &0.679570     &{}&0.677083 	&{}&2.487124E-03\\
    $ 0.5$	&{}&$0.25$&{}    &0.642012     &{}&0.642008 	&{}&4.244802E-06\\
	        &{}&$0.50$&{}    &0.824361     &{}&0.824219	    &{}&1.418854E-04\\
	        &{}&$0.75$&{}    &1.058500     &{}&1.057373   	&{}&1.126961E-03\\
	        &{}&$1.00$&{}	 &1.359141     &{}&1.354167	    &{}&4.974248E-03\\
    $0.75$	&{}&$0.25$&{}    &0.963019     &{}&0.963012  	&{}&6.369688E-06\\
	        &{}&$0.50$&{}    &1.236541	   &{}&1.236328	    &{}&2.128250E-04\\
	        &{}&$0.75$&{}    &1.587750     &{}&1.586060	    &{}&   1.690020E-03     \\
	        &{}&$1.00$&{}    &2.038711     &{}&2.031250  	&{}&7.461370E-03\\
\bottomrule
\end{tabular}
\end{table}
\begin{figure}[!t]
\includegraphics[width=5.30cm,height=5.20 cm]{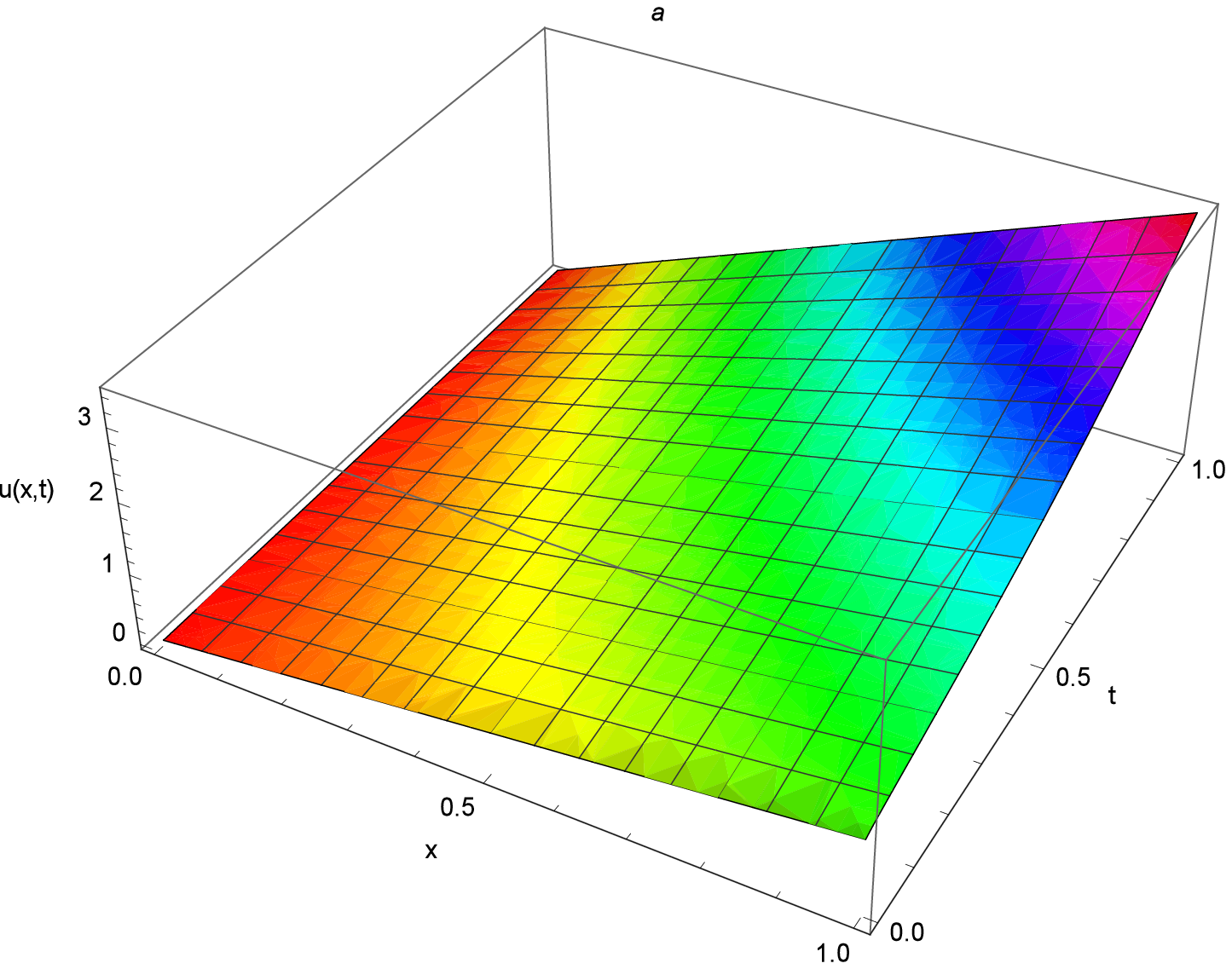}
\includegraphics[width=5.30cm,height=5.20 cm]{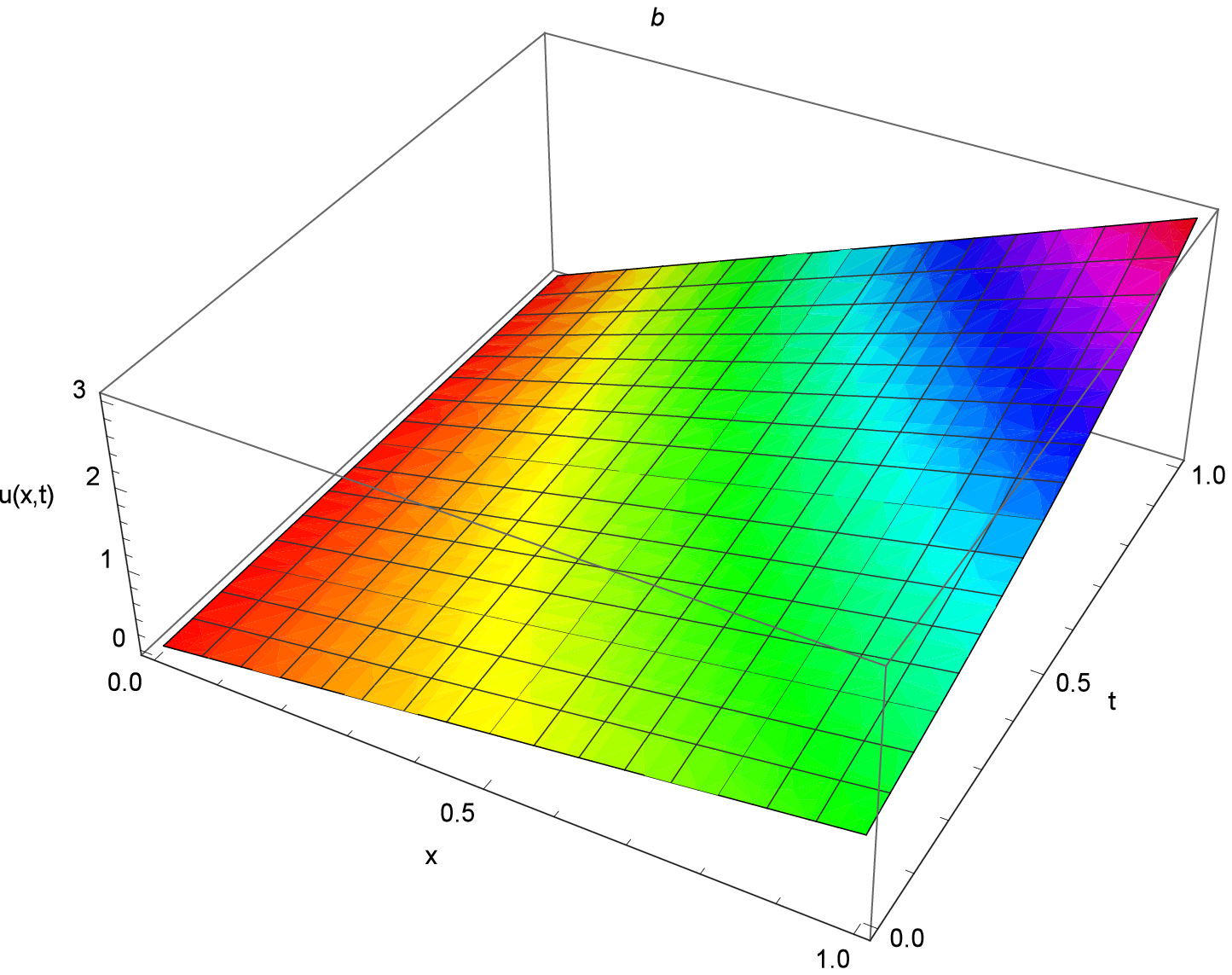}
\includegraphics[width=5.530cm,height=5.20 cm]{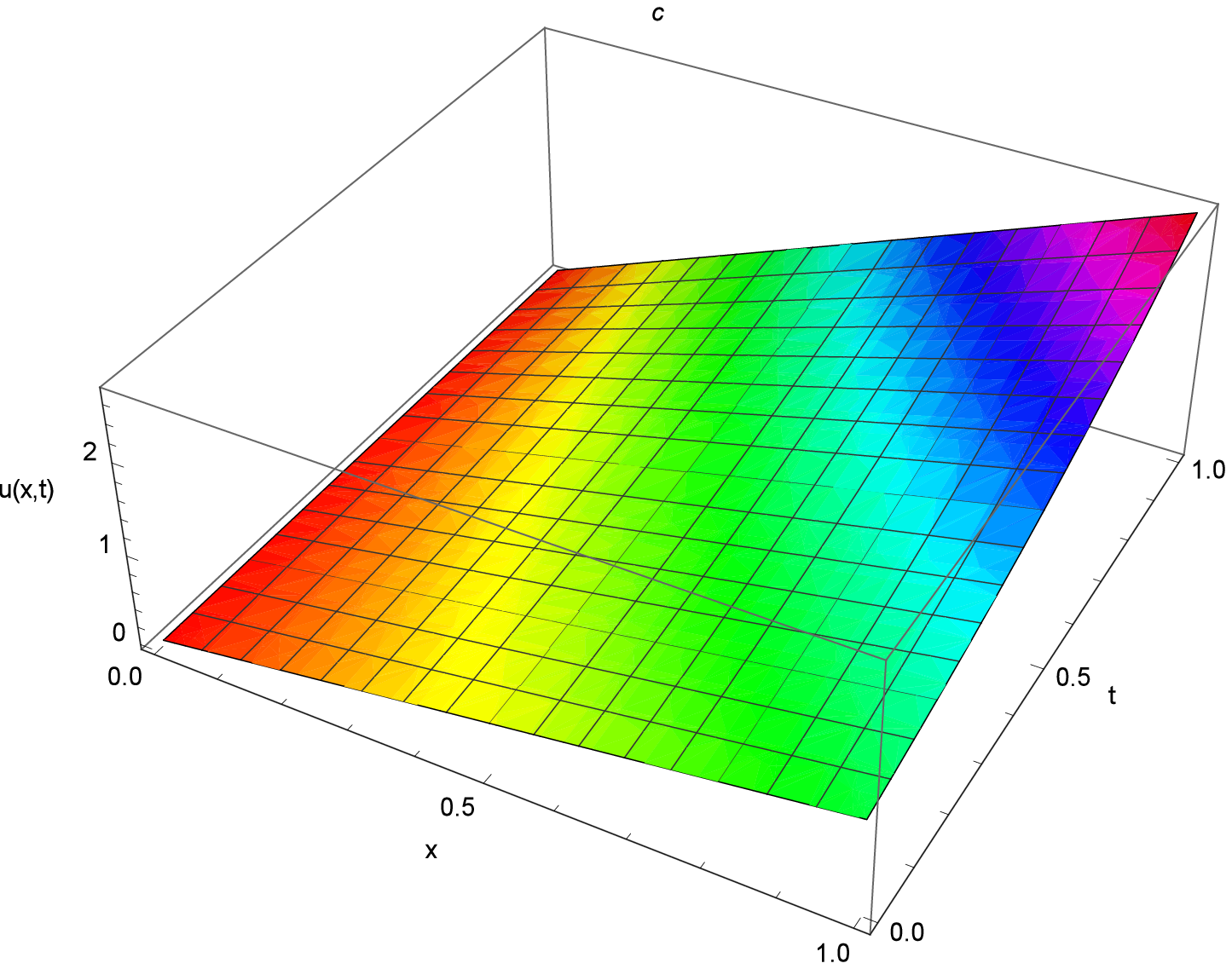}
\caption{The surface HPTM solution behavior of $u$ of Example \ref{ex1} for (a) $\alpha =0.8$; (b
) $\alpha =0.9$; (c) $\alpha =1.0$} \label{fig1.1}
\end{figure}
\begin{figure}[!t]
\centering
\includegraphics[width=9.30cm,height=6.20 cm]{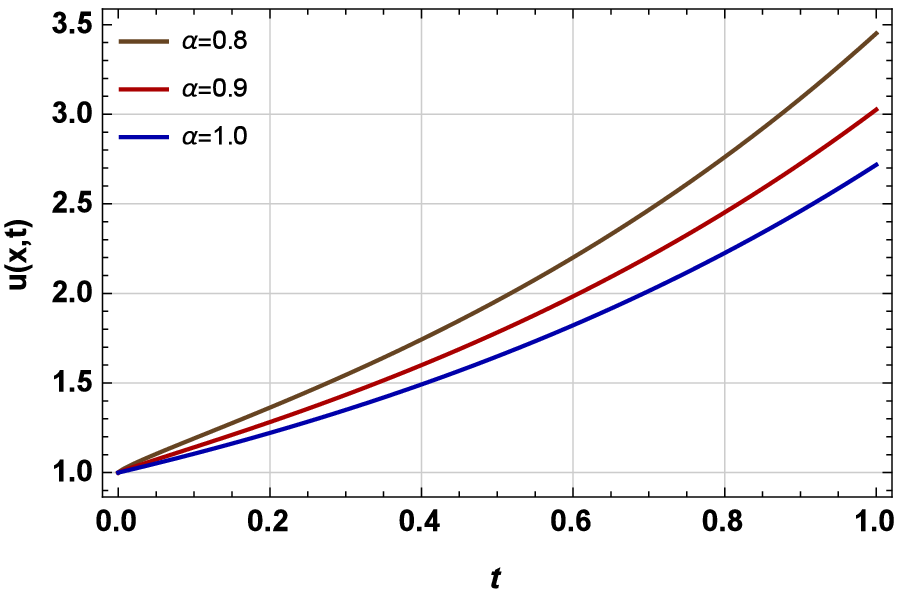}
\caption{Plots of HPTM solution $u(x,t)$ of Example \ref{ex1} for$\alpha=0.8, 0.9, 1.0$, $t\in (0,1); x=1$} \label{fig1.11}
\end{figure}
\end{example}

\begin{example}\label{ex2}
Consider initial value TFPDE with proportional delay as given in \cite{AG11,SUE16}
\begin{equation} \label{eqn-ex2}
\left\{\begin{split}
& \mathcal{D}_t^{\alpha} u(x, t) = u\(x, \frac{t}{2}\) \frac{\partial^2 }{\partial x^2}  u\left(x,\frac{t}{2}\right) - u(x, t) \\
& u(x, 0) = x^2,
\end{split} \right.
\end{equation}
Taking Laplace transform of Eq. \eqref{eqn-ex2}, we get
\begin{equation}\label{eqn-ex2-LT}
 \mathcal{U}(x, s)=\frac{x^2}{s} +\frac{1}{s^\alpha}\mathcal{L}\left[u\(x, \frac{t}{2}\) \frac{\partial^2 }{\partial x^2}  u\left(x,\frac{t}{2}\right) - u(x, t)\right]
\end{equation}
The inverse Laplace transform of Eq. \eqref{eqn-ex2-LT} leads to
\begin{equation}\label{eqn-ex2-ILT}
  u(x,t)=x^2 +\mathcal{L}^{-1}\left[\frac{1}{s}x +\frac{1}{s^\alpha}\mathcal{L}\left\{u\(x, \frac{t}{2}\) \frac{\partial^2 }{\partial x^2}  u\left(x,\frac{t}{2}\right) - u(x, t)\right\}\right]
\end{equation}
Eq. \eqref{eqn-ex2-ILT} with basic solution \eqref{eqn-basic} leads to
\begin{equation}\label{eqn-ex2-ILT-HPM}
 \sum\limits_{n = 0}^\infty  p^n u_n(x,t)=x^2+p\left[\mathcal{L}^{-1}\left\{\frac{1}{s^\alpha}\mathcal{L}\left(  \sum\limits_{n = 0}^{\infty} u_n\(x, \frac{t}{2}\) \frac{\partial^2 }{\partial x^2}  u_{k-n}\left(x,\frac{t}{2}\right)  - \sum\limits_{n = 0}^\infty u_n(x, t)\right)\right\}\right]
\end{equation}
On comparing the coefficient of like powers of $p$, we get
\begin{equation} \label{eqn-ex2-HP-LTM}
\begin{split}
 p^0 : u_0(x,t)&=x^2,\\
p^1 :u_1(x,t)&= \mathcal{L}^{-1}\left[\frac{1}{s^\alpha}\mathcal{L}\left[  u_0\(x, \frac{t}{2}\) \frac{\partial^2 }{\partial x^2}  u_{0}\left(x,\frac{t}{2}\right)  -  u_0(x, t)\right]\right]\\
&= \frac{x^2 t^\alpha}{\Gamma{(1+\alpha)}} \\
p^2:u_2(x,t)&= \mathcal{L}^{-1}\left[\frac{1}{s^\alpha}\mathcal{L}\left[  u_0\(x, \frac{t}{2}\) \frac{\partial^2 }{\partial x^2}  u_{1}\left(x,\frac{t}{2}\right)  + u_1\(x, \frac{t}{2}\) \times\right.\right.\\
&\left.\left. \frac{\partial^2 }{\partial x^2}  u_{0}\left(x,\frac{t}{2}\right)-  u_1(x, t)\right]\right]= \frac{x^2 t^{2\alpha}(2^{2-\alpha}-1)}{\Gamma{(1+2\alpha)}}\\
p^3:u_3(x,t)&= \mathcal{L}^{-1}\left[\frac{1}{s^\alpha}\mathcal{L}\left[  u_0\(x, \frac{t}{2}\) \frac{\partial^2 }{\partial x^2}  u_{2}\left(x,\frac{t}{2}\right)  + u_2\(x, \frac{t}{2}\) \frac{\partial^2 }{\partial x^2}  u_{0}\left(x,\frac{t}{2}\right)\right]\right]\\
&+ \mathcal{L}^{-1}\left[\frac{1}{s^\alpha}\mathcal{L}\left[  u_1\(x, \frac{t}{2}\) \frac{\partial^2 }{\partial x^2}  u_{1}\left(x,\frac{t}{2}\right)-  u_2(x, t)\right]\right]\\
&=\frac{x^2 t^{3\alpha}}{ \Gamma{(1+3\alpha)}}
\left\{1- 2^{2-\alpha} - 2^{2-2\alpha} + 2^{4-3\alpha}  + \frac{ \Gamma{(1+2\alpha)}}{\Gamma{(1+\alpha)}^2} ~~ 2^{1+\alpha}\right\}\\
p^4:u_4(x,t)&= \mathcal{L}^{-1}\left[\frac{1}{s^\alpha}\mathcal{L}\left\{  u_0\(x, \frac{t}{2}\) \frac{\partial^2 }{\partial x^2}  u_{3}\left(x,\frac{t}{2}\right)  + u_2\(x, \frac{t}{2}\) \frac{\partial^2 }{\partial x^2}  u_{1}\left(x,\frac{t}{2}\right)\right.\right.\\
&+ \left.\left. u_1\(x, \frac{t}{2}\) \frac{\partial^2 }{\partial x^2}  u_{2}\left(x,\frac{t}{2}\right)+ u_3\(x, \frac{t}{2}\) \frac{\partial^2 }{\partial x^2}  u_{0}\left(x,\frac{t}{2}\right)-  u_3(x, t)\right\}\right]\\
&= \frac{{x^2 t^{4\alpha}}}{\Gamma{(1+4\alpha)}}\left(1-2^{2-\alpha}- 2^{2-2\alpha}+3\times2^{2-3\alpha}+2^{4-4\alpha}
+2^{4-5\alpha}-2^{6-6\alpha}\right) \\
&+\frac{{x^2 t^{4\alpha}}}{\Gamma{(1+4\alpha)}}\left( \frac{(2^{1+4\alpha}-2^{3+2\alpha})\Gamma{(1+2\alpha)}}{\Gamma{(1+\alpha)^2}}+\frac{(2^{2+3\alpha}-2^{4+2\alpha})\Gamma{(1+3\alpha)}}{\Gamma{(1+\alpha)}\Gamma{(1+2\alpha)}} \right)\\
& \qquad \qquad \qquad  \vdots  \qquad \qquad \qquad  \qquad \vdots
 \end{split}
\end{equation}
Thus, the solution for Eq. \eqref{eqn-ex2} is given by
\begin{equation}\label{eqn-ex2-HP-ILT-SOLN}
 u(x,t)=u_0(x,t)+u_1(x,t)+u_2(x,t)+u_3(x,t)+u_4(x, t)+\ldots
\end{equation}
which is the required exact solution, same solution is obtained by Sarkar et al \cite{SUE16}. In particular for $\alpha=1$, the seventh order solution is obtained as \begin{equation}\label{eqn-ex2-HP-ILT-exact-SOLN}
u(x,t)=x^2\left( 1+t+\frac{t^2}{2}+\frac{t^3}{6}+\frac{t^4}{24}+\frac{t^5}{120}+\frac{t^6}{720}+\frac{t^7}{5040}\right)
\end{equation}
which is same as obtained by DTM and RDTM \cite {AG11}, and is a closed form of the exact solution $u(x, t) = x^2 \exp(t)$. The HPTM solution for $\alpha=1$ is reported in Table \ref{tab2.1}.  The surface solution behavior of $u(x,t)$ for different values of $\alpha =0.8, 0.9, 1.0$ is depicted in Fig. \ref{fig2.1}, and the plots of the solution for $x=1$ at different time intervals $t\leq 1$ is depicted in Fig \ref{fig2.11}. It is found that the proposed HPTM results are agreed well with HPM as well as DTM solutions and approaching to the exact solution.

\begin{figure}[!t]
\includegraphics[width=5.30cm,height=5.20 cm]{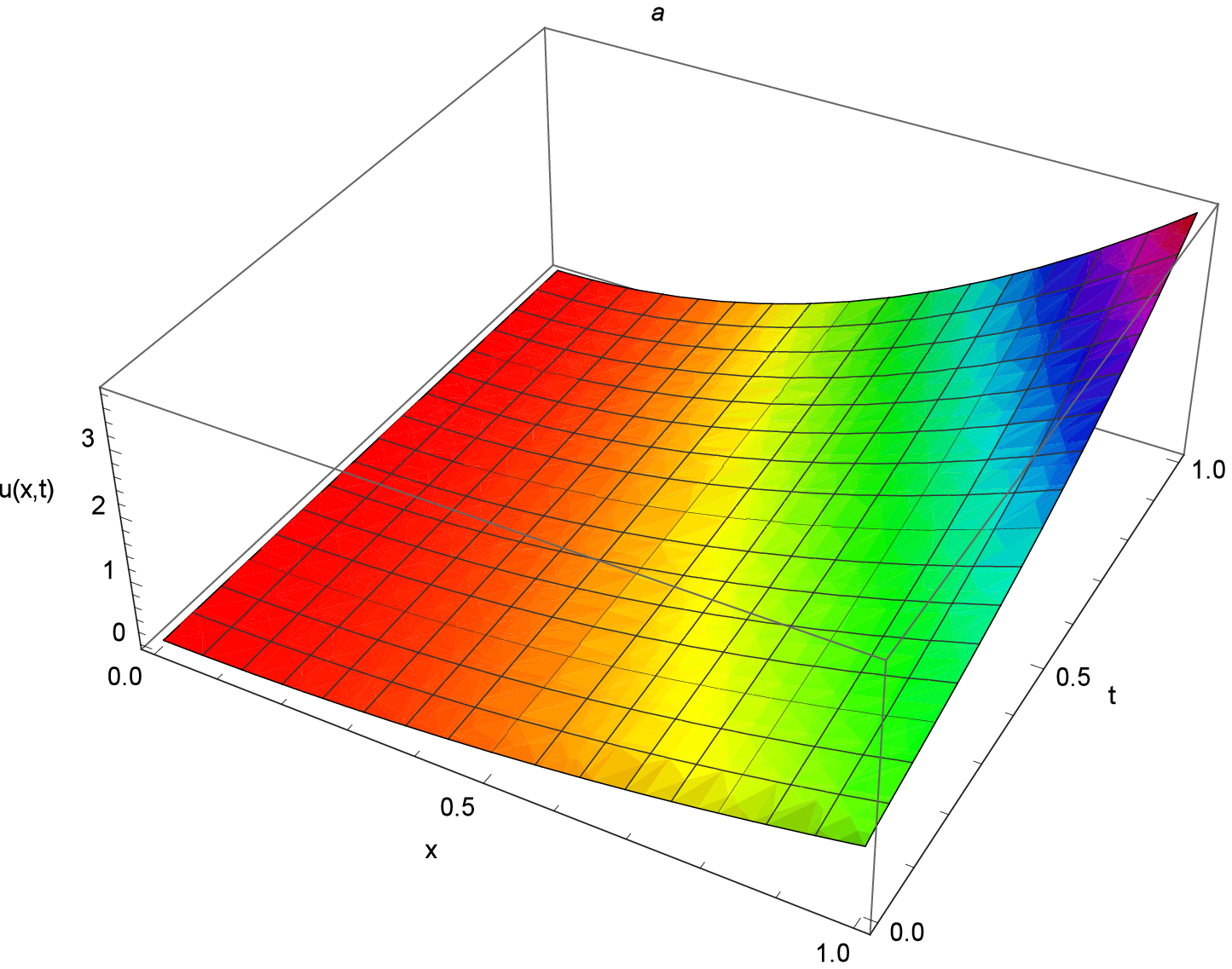}
\includegraphics[width=5.30cm,height=5.20 cm]{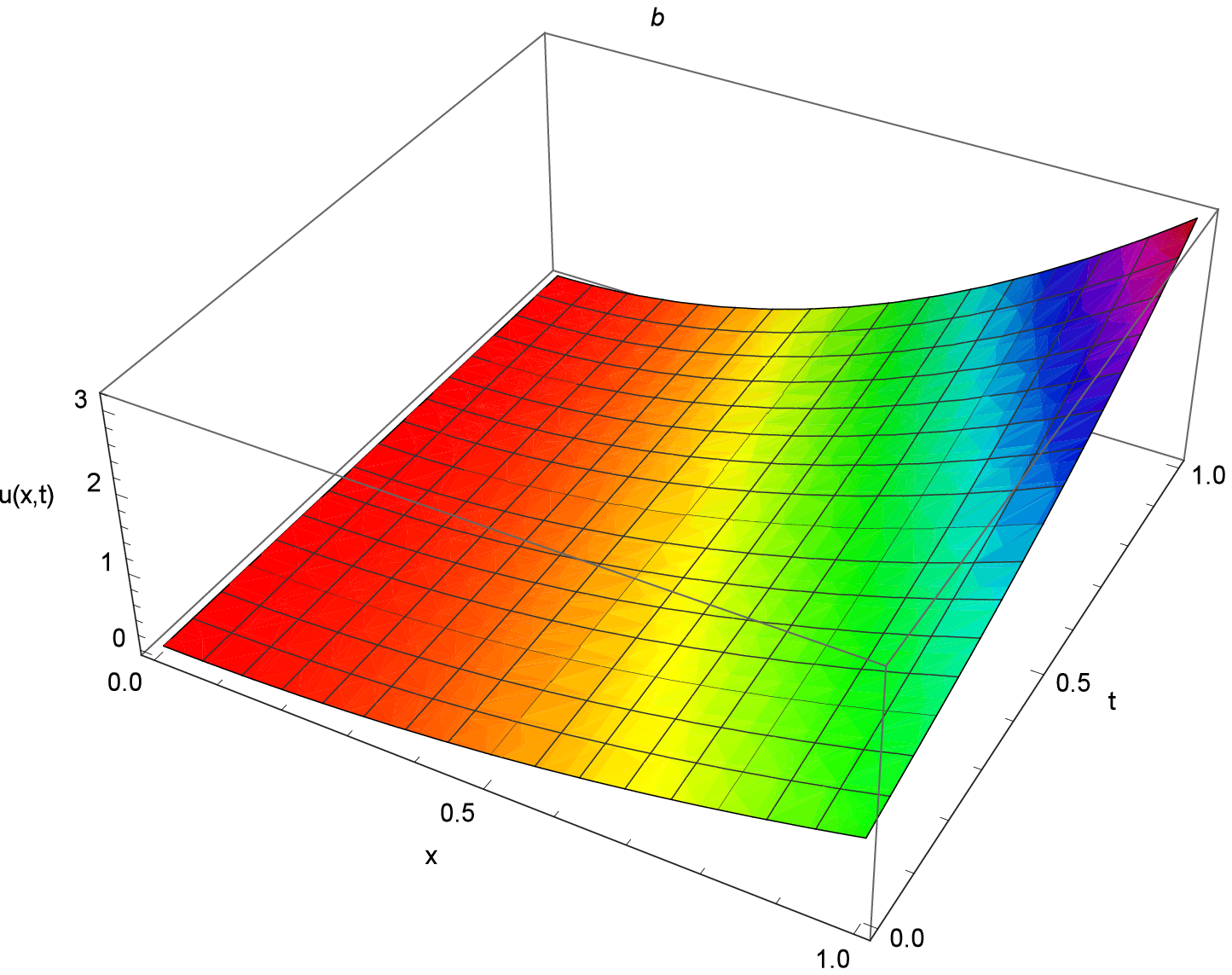}
\includegraphics[width=5.530cm,height=5.20 cm]{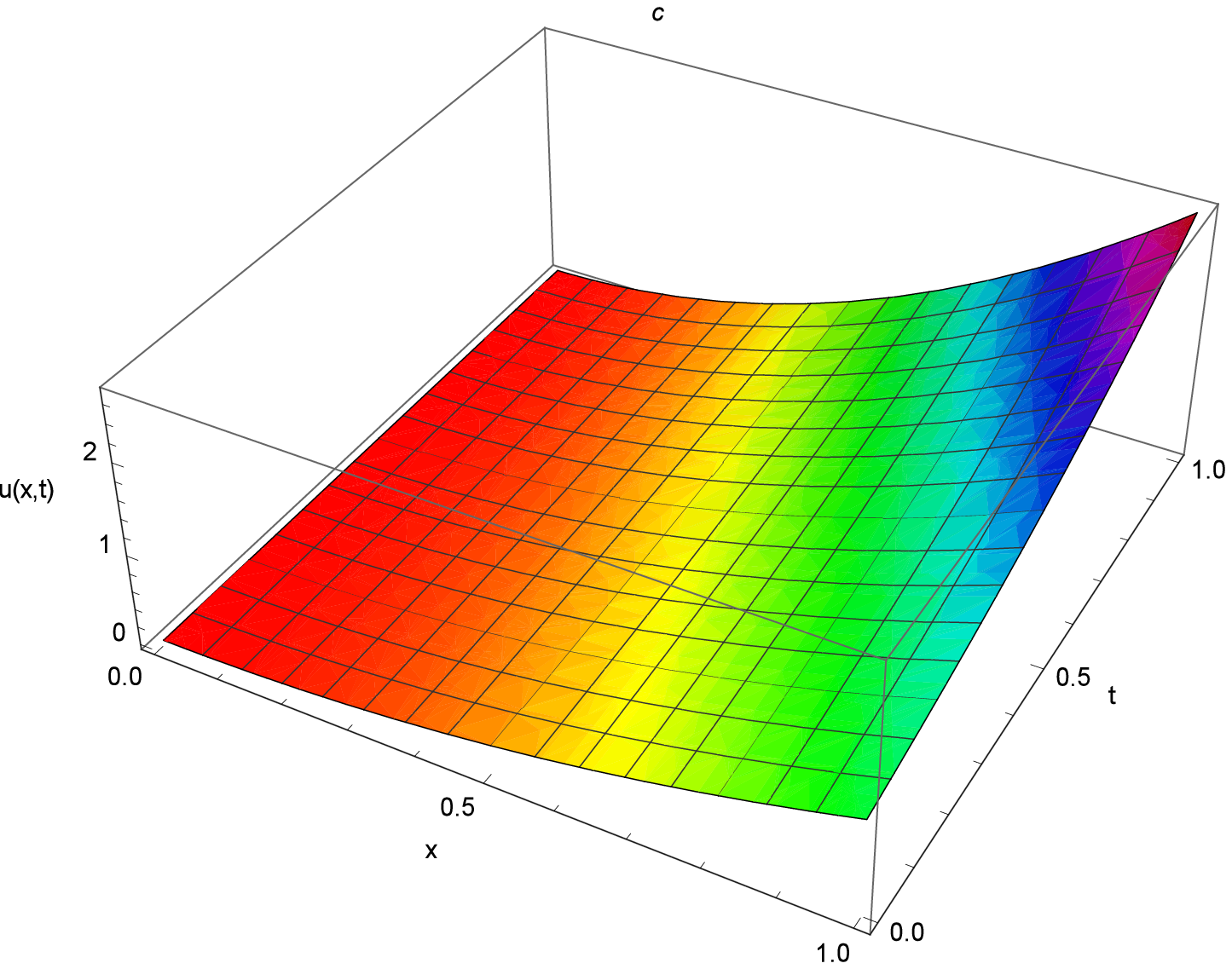}
\caption{The surface solution behavior of $u$ of Example \ref{ex2} for (a) $\alpha =0.8$; (b
) $\alpha =0.9$; (c) $\alpha =1.0$} \label{fig2.1}
\end{figure}

\begin{figure}[!t]
\centering
\includegraphics[width=9.30cm,height=6.20 cm]{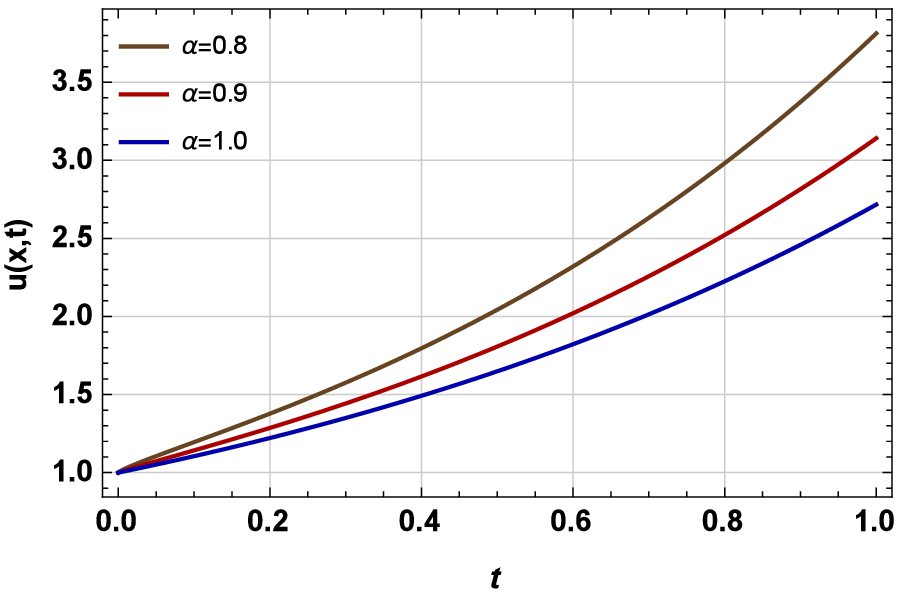}
\caption{Plots of  HPTM solution $u(x,t)$ of Example \ref{ex2} for$\alpha=0.8, 0.9, 1.0$  $t(0,1);x=1$} \label{fig2.11}
\end{figure}

\begin{table}[t!]
\caption{Approximate HPTM solution of Example \ref{ex2} with first six terms for at $\alpha =1.0$}  \label{tab2.1}
\centering
\begin{tabular}{llllllllll}
\toprule
%Errors & \multicolumn{5}{l}{$n=13$} & {} & \multicolumn{5}{l}{$n=26$}\\\cline{2-6} \cline{8-12}
$x$	   &{}& $t$   &{}&      Exact   &{}&\multicolumn{2}{l}{HPTM}&{}&  \\ %\cline{7-9}
{}	   &{}&   {}  &{}&              &{}&solution         &{}& $E_{abs}$ \\
\midrule										
$0.25$\qquad  \qquad \qquad\qquad& \qquad \qquad&$0.25$  \qquad\qquad&{}&	0.0802516\qquad\qquad \qquad&{}&	0.0802516 \qquad\qquad\qquad&{}&	7.812108E-10&	 \\
	   &{}&$0.50$ &{}&	0.1030451&{}&	0.1030450&{}&	1.032903E-07&	\\
	   &{}&$0.75$ &{}&  0.1323125&{}&	0.1323107&{}&	1.824464E-06&	\\
	   &{}&$1.00$ &{}&	0.1698926&{}&	0.1698785&{}&	1.414206E-05&	\\
$0.50$ &{}&$0.25$ &{}&	0.3210064&{}&	0.3210064&{}&	3.124843E-09&	\\
	   &{}&$0.50$ &{}&	0.4121803&{}&	0.4121799&{}&	4.131611E-07&   \\
	   &{}&$0.75$ &{}&	0.5292500&{}&	0.5292427&{}&	7.297854E-06&	\\
	   &{}&$1.00$ &{}&	0.6795705&{}&	0.6795139&{}&	5.656823E-05&	\\
$0.75$ &{}&$0.25$ &{}&	0.7222643&{}&	0.7222643&{}&	7.030897E-09&	\\
	   &{}&$0.50$ &{}&	0.9274057&{}&	0.9274048&{}&	9.296126E-07&	\\
	   &{}&$0.75$ &{}&	1.1908130&{}&	1.1907963&{}&	1.642017E-05&	\\
	   &{}&$1.00$ &{}&	1.5290340&{}&	1.5289062&{}&	1.272785E-04&	\\
\bottomrule
\end{tabular}
\end{table}
\end{example}

\begin{example}\label{ex3}
Consider initial value TFPDE with proportional delay as given in \cite{AG11,SUE16}
\begin{equation} \label{eqn-ex3}
\left\{\begin{split}
& \mathcal{D}_t^{\alpha} u(x, t) = \frac{\partial^2 }{\partial x^2} u\left(\frac{x}{2},\frac{t}{2}\right) ~\frac{\partial }{\partial x}  u\(\frac{x}{2}, \frac{t}{2}\)-\frac{1}{8} \frac{\partial }{\partial x}  u\(x, t\) - u(x, t) \\
& u(x, 0) = x^2,
\end{split} \right.
\end{equation}
\begin{table}[t!]
\caption{Approximate HPTM solution of Example \ref{ex3} with first six terms for at $\alpha =1.0$}  \label{tab3.1}
\centering
\begin{tabular}{llllllllll}
\toprule
%Errors & \multicolumn{5}{l}{$n=13$} & {} & \multicolumn{5}{l}{$n=26$}\\\cline{2-6} \cline{8-12}
$x$	   &{}& $t$   &{}& Exact  &{}&\multicolumn{2}{l}{HPTM}&{}&   \\%\cline{4-9}
{}	   &{}& {}  &{}& {}  &{}&solution&{}& $E_{abs}$ \\
\midrule										
$0.25$ \qquad \qquad&\qquad \qquad&$0.25$ \qquad \qquad&{}&	4.867505E-02 \qquad \qquad&\qquad \qquad&	4.867505E-02\qquad\qquad \qquad &{}&	 7.338727E-10& 	 \\
	   &{}&$0.50$ &{}&	3.790817E-02&{}&	3.790826E-02&{}&	9.114643E-08&	\\
	   &{}&$0.75$ &{}&   2.952291E-02&{}&	2.952442E-02&{}&	1.512146E-06&   \\
	   &{}&$1.00$ &{}&	2.299247E-02&{}&	2.300347E-02&{}&    1.100715E-05&	\\
$0.50$ &{}&$0.25$ &{}&	1.947002E-01&{}&	1.947002E-01&{}&	2.935491E-09&	\\
	   &{}&$0.50$ &{}&	1.516327E-01&{}&	1.516330E-01&{}&	3.645857E-07&	\\
	   &{}&$0.75$ &{}&	1.180916E-01&{}&	1.180977E-01&{}&	6.048582E-06&	\\
	   &{}&$1.00$ &{}&	9.196986E-02&{}&	9.201389E-02&{}&	4.402860E-05&	\\
$0.75$ &{}&$0.25$ &{}&  4.380754E-01&{}&	4.380754E-01&{}&	6.604854E-09&	\\
	   &{}&$0.50$ &{}&	3.411735E-01&{}&	3.411743E-01&{}&	8.203179E-07&	\\
	   &{}&$0.75$ &{}&	2.657062E-01&{}&	2.657198E-01&{}&	1.360931E-05&	\\
	   &{}&$1.00$ &{}&	2.069322E-01&{}&	2.070313E-01&{}&	9.906434E-05&	\\
\bottomrule
\end{tabular}
\end{table}
Taking Laplace transform of Eq. \eqref{eqn-ex3}, we get
\begin{equation}\label{eqn-ex3-LT}
 \mathcal{U}(x, s)=\frac{x^2}{s} +\frac{1}{s^\alpha}\mathcal{L}\left[\frac{\partial^2 }{\partial x^2} u\left(\frac{x}{2},\frac{t}{2}\right) ~\frac{\partial }{\partial x}  u\(\frac{x}{2}, \frac{t}{2}\)-\frac{1}{8} \frac{\partial }{\partial x}  u\(x, t\) - u(x, t)\right]
\end{equation}
The inverse Laplace transform leads to
\begin{equation}\label{eqn-ex3-ILT}
  u(x,t)=x^2 +\mathcal{L}^{-1}\left[\frac{1}{s}x +\frac{1}{s^\alpha}\mathcal{L}\left\{\frac{\partial^2 }{\partial x^2} u\left(\frac{x}{2},\frac{t}{2}\right) ~\frac{\partial }{\partial x}  u\(\frac{x}{2}, \frac{t}{2}\)-\frac{1}{8} \frac{\partial }{\partial x}  u\(x, t\) - u(x, t)\right\}\right]
\end{equation}
Homotopy perturbation transform method on Eq.\eqref{eqn-ex3-ILT} leads to
\begin{equation}\label{eqn-ex3-ILT-HPM}
\begin{split}
&\sum\limits_{n = 0}^\infty  p^n u_n(x,t)=x^2+\\
& p\left(\mathcal{L}^{-1}\left[\frac{1}{s^\alpha}\mathcal{L}\left[  \sum\limits_{n = 0}^{k=\infty}\frac{\partial^2 u_n }{\partial x^2}\left(\frac{x}{2},\frac{t}{2}\right) ~\frac{\partial u_{k-n}}{\partial x}  \(\frac{x}{2}, \frac{t}{2}\) -\frac{1}{8} \frac{\partial }{\partial x}\sum\limits_{n = 0}^\infty  u_n\(x, t\) - \sum\limits_{n = 0}^\infty u_n(x, t)\right]\right]\right)
\end{split}
\end{equation}
On comparing the coefficient of like powers of $p$, we get
\begin{equation} \label{eqn-ex3-HP-LTM}
\begin{split}
 p^0 :& u_0(x,t)=x^2,\\
p^1 :&u_1(x,t)= \mathcal{L}^{-1}\left[\frac{1}{s^\alpha}\mathcal{L}\left[ \frac{\partial^2 }{\partial x^2} u_0\left(\frac{x}{2},\frac{t}{2}\right) ~\frac{\partial }{\partial x}  u_{0}\(\frac{x}{2}, \frac{t}{2}\)  -\frac{1}{8} \frac{\partial }{\partial x} u_0\(x, t\) -u_0(x, t)\right]\right]
\\&= \frac{-x^2 t^\alpha}{\Gamma{(1+\alpha)}} \\
p^2:&u_2(x,t)= \mathcal{L}^{-1}\left[\frac{1}{s^\alpha}\mathcal{L}\left[ \frac{\partial^2 }{\partial x^2} u_0\left(\frac{x}{2},\frac{t}{2}\right) ~\frac{\partial }{\partial x}  u_{1}\(\frac{x}{2}, \frac{t}{2}\)+u_1\left(\frac{x}{2},\frac{t}{2}\right) ~\frac{\partial }{\partial x}  u_{0}\(\frac{x}{2}, \frac{t}{2}\) \right.\right.\\
&\left.\left. -\frac{1}{8} \frac{\partial }{\partial x} u_1\(x, t\) -u_1(x, t)\right]\right]\\
&= t^{2\alpha}x\frac{(2^{1-\alpha}+2^{2}x+1)}{2\Gamma{(1+2\alpha)}}\\
p^3:&u_3(x,t)= L^{-1}\left[\frac{1}{s^\alpha}L\left[ \frac{\partial^2 }{\partial x^2} u_0\left(\frac{x}{2},\frac{t}{2}\right) ~\frac{\partial }{\partial x}  u_{2}\(\frac{x}{2}, \frac{t}{2}\)+\frac{\partial^2 }{\partial x^2} u_1\left(\frac{x}{2},\frac{t}{2}\right) ~\frac{\partial }{\partial x}  u_{1}\(\frac{x}{2}, \frac{t}{2}\)\right.\right.\\
&+ \left.\left. \frac{\partial^2 }{\partial x^2} u_2\left(\frac{x}{2},\frac{t}{2}\right) ~\frac{\partial }{\partial x}  u_{0}\(\frac{x}{2}, \frac{t}{2}\) -\frac{1}{8} \frac{\partial }{\partial x} u_2\(x, t\) -u_2(x, t)\right]\right]\\
&=\frac{ t^{3\alpha}}{ 2\Gamma{(1+3\alpha)}}
\left\{-1-2x^2-2^4+ 2^{-\alpha} + 2^{-2\alpha} + 2^{-3-\alpha}+2^{-3-2\alpha}+2^{-2-3\alpha}\right. \\
&\left.+2^{-1-2\alpha}x\frac{\Gamma(1+2\alpha)}{\Gamma(1+\alpha)^2} \right\}\\
p^4:&u_4(x,t)=\mathcal{L}^{-1}\left[\frac{1}{s^\alpha}\mathcal{L}\left[ \frac{\partial^2 u_0}{\partial x^2}\left(\frac{x}{2},\frac{t}{2}\right) \frac{\partial u_{3}}{\partial x}\(\frac{x}{2}, \frac{t}{2}\)+\frac{\partial^2 u_1}{\partial x^2}\left(\frac{x}{2},\frac{t}{2}\right)\frac{\partial u_{2}}{\partial x}\(\frac{x}{2}, \frac{t}{2}\)\right.\right.\\
&+\left.\left. \frac{\partial^2 u_2}{\partial x^2} \left(\frac{x}{2},\frac{t}{2}\right) \frac{\partial u_{1}}{\partial x}  \(\frac{x}{2}, \frac{t}{2}\)+\frac{\partial^2 u_3}{\partial x^2}\left(\frac{x}{2},\frac{t}{2}\right)\frac{\partial u_{0}}{\partial x} \(\frac{x}{2}, \frac{t}{2}\)\right.\right.\\
&\left.\left. -\frac{1}{8} \frac{\partial }{\partial x} u_3\(x, t\) -u_3(x, t)\right]\right]\\
&=\frac{ t^{4\alpha}}{ \Gamma{(1+4\alpha)}}
\left\{(3\times2^{-5}-2^{-3-\alpha}-2^{-3-2\alpha}+2^{-3-4\alpha}+2^{-3-5\alpha})\right\}\\
&+\frac{ t^{4\alpha}}{ \Gamma{(1+4\alpha)}}\left\{((-2^{-2-3\alpha}-2^{-2-2\alpha}-
2^{-2-\alpha}+3\times2^-3)x+x^2)\right\}\\
& +\frac{ t^{4\alpha}}{ \Gamma{(1+4\alpha)}}
-(-2^{-4-\alpha}+2^{-5-2\alpha}+2^{-2-2\alpha}x)\frac{\Gamma(1+2\alpha)}{\Gamma(1+\alpha)^2}\\
&-\frac{ t^{4\alpha}}{ \Gamma{(1+4\alpha)}}\left\{
2^{-4-4\alpha}(-2+2^\alpha+2^{3+\alpha}x)\frac{\Gamma(1+3\alpha)}
{\Gamma(1+2\alpha)\times \Gamma(1+4\alpha))} \right\}\\
& \qquad \qquad \qquad  \vdots  \qquad \qquad \qquad  \qquad \vdots
 \end{split}
\end{equation}
The required solution of Eq. \eqref{eqn-ex3} is
\begin{equation}\label{eqn-ex3-HP-ILT-SOLN}
 u(x,t)=u_0(x,t)+u_1(x,t)+u_2(x,t)+u_3(x,t)+\ldots
\end{equation}
which is a closed form to the exact solution and the solution obtained by Sarkar et al \cite{SUE16}.  In particular, for $\alpha=1$, the seventh order solution is obtained as
\begin{equation}
\begin{split}
u(x, t)&= x^2 \left(1-t + \frac{t^2}{3}-\frac{t^3}{6}+\frac{t^4}{24}-\frac{t^5}{120}+\frac{t^6}{720}-\frac{t^7}{5040}+\ldots \right)
\end{split}
\end{equation}
which is same as obtained by DTM and RDTM \cite {AG11} , and is a closed form of the exact solution $u(x, t) = x^2 \exp(-t)$. The HPTM solution for $\alpha =1.0$ is reported in Table \ref{tab3.1}. The solution behavior of $u$ for different values of $\alpha =0.8, 0.9, 1.0$ is depicted in Fig. \ref{fig3.1}, while the plots for $x=1$ at different time levels $t\leq 1$ is depicted in Fig. \ref{fig3.11}. This evident that HPTM solutions are agreed well with HPM as well as DTM solutions, and approaching to the exact solution.

\begin{figure}[!t]
\includegraphics[width=5.30cm,height=5.20 cm]{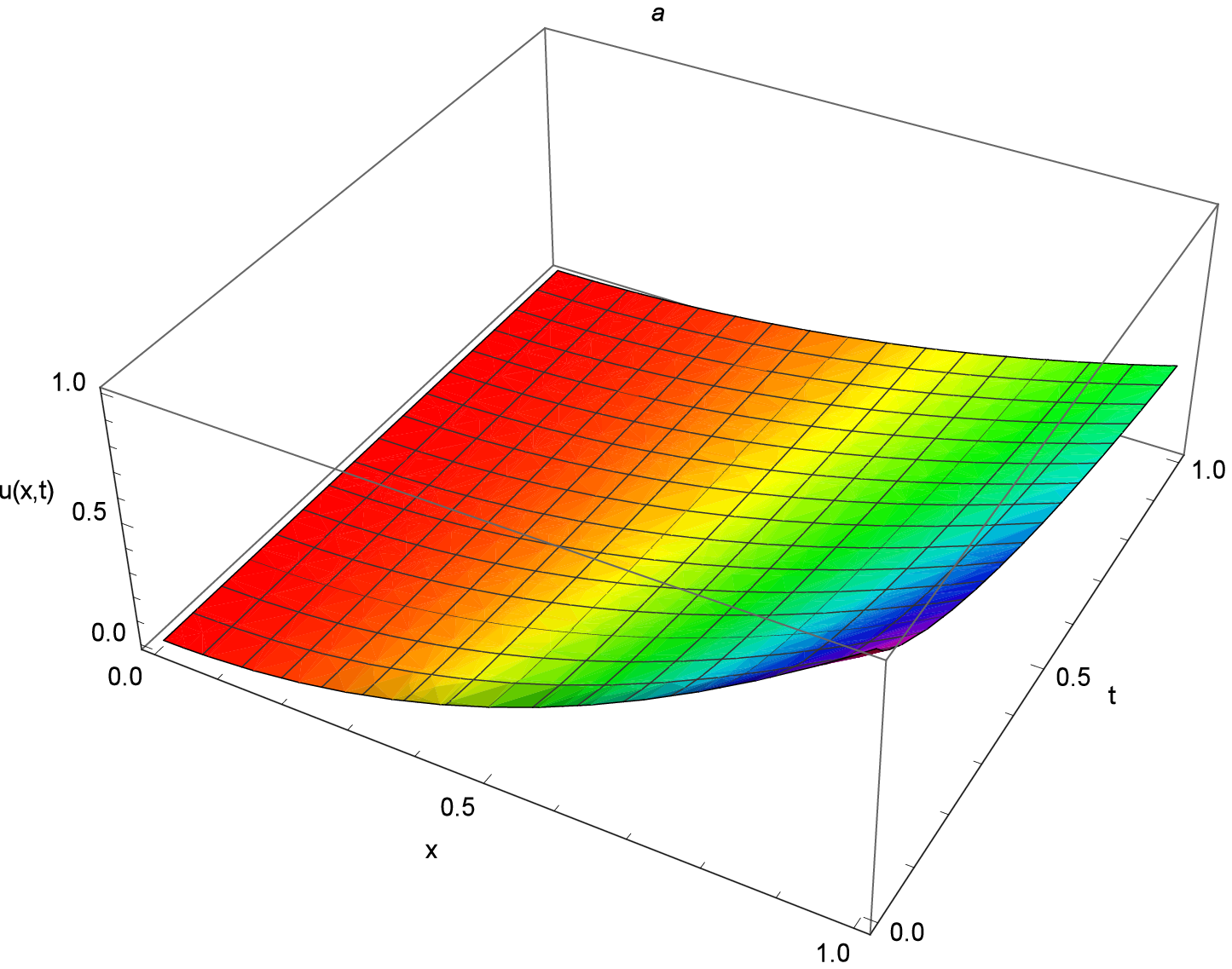}
\includegraphics[width=5.30cm,height=5.20 cm]{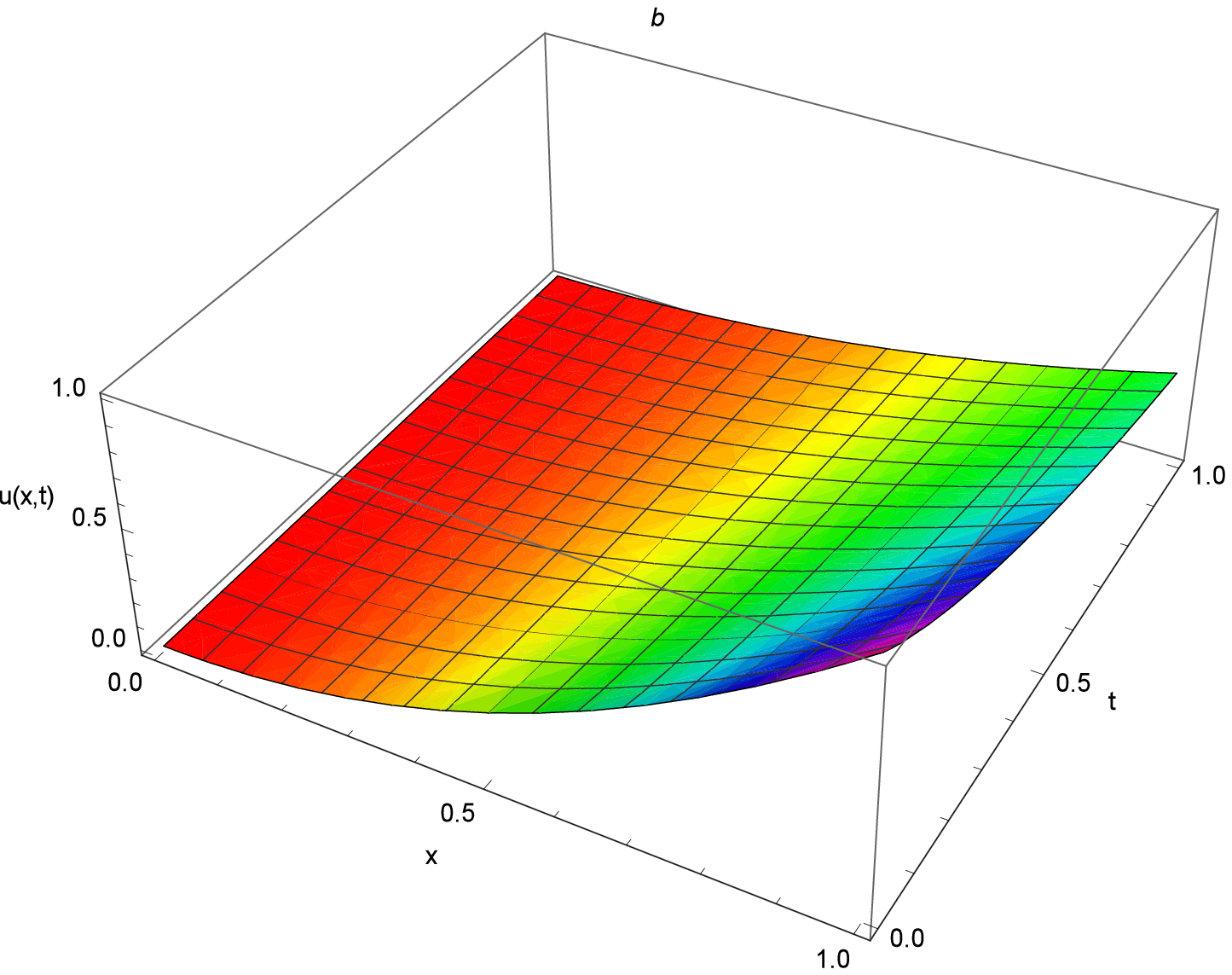}
\includegraphics[width=5.530cm,height=5.20 cm]{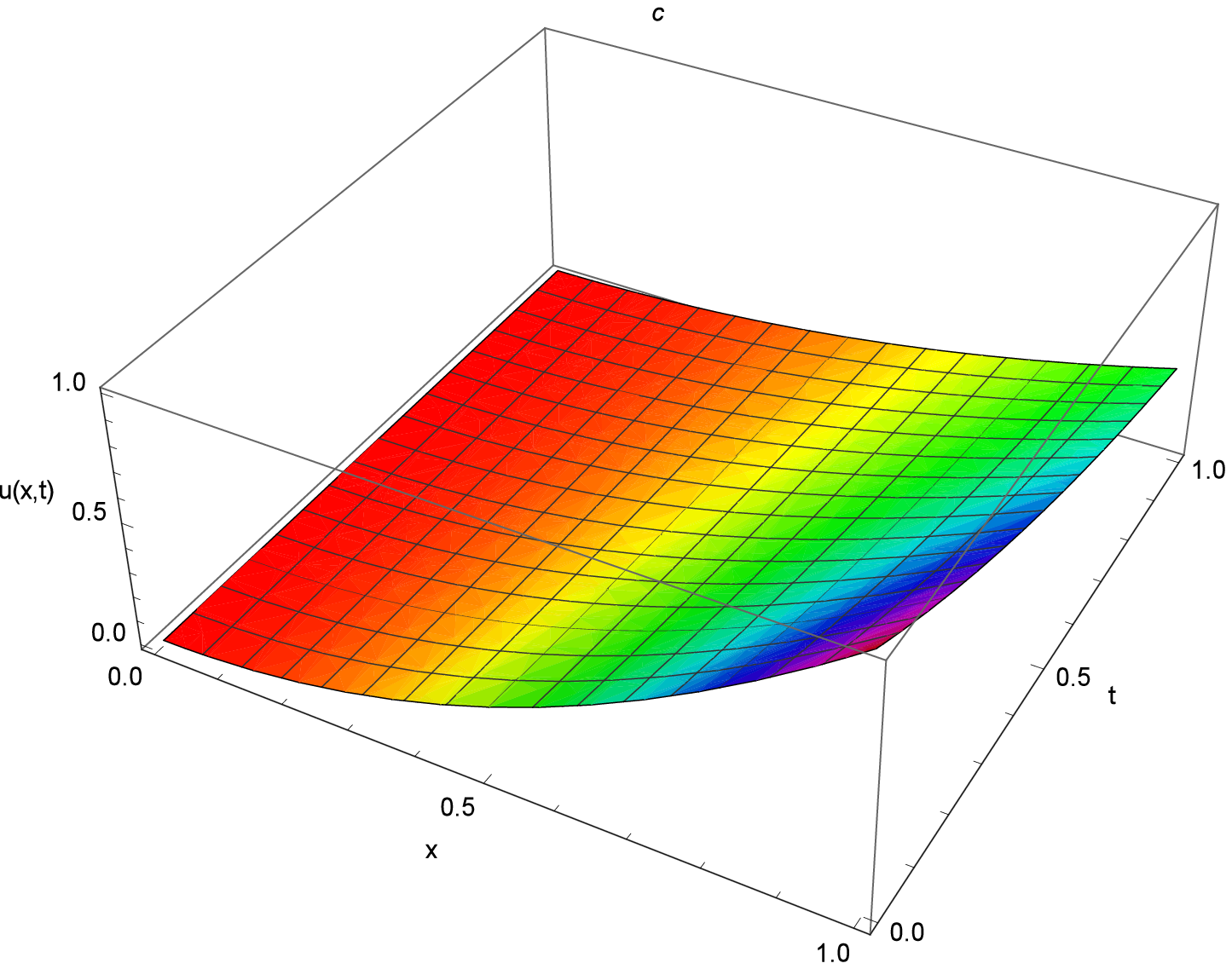}
\caption{The solution behavior of HPTM solution $u$ of Example \ref{ex3} for (a) $\alpha =0.8, 0.9, 1.0$; (b
) $\alpha =0.9$; (c) $\alpha =1.0$} \label{fig3.1}
\end{figure}

\begin{figure}[!t]
\centering
\includegraphics[width=9.30cm,height=6.20 cm]{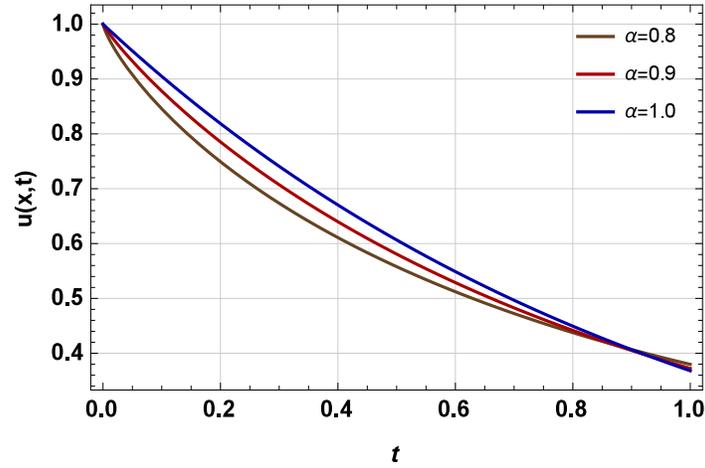}
\caption{Plots of  HPTM solution $u(x,t)$ of Example \ref{ex3} for$\alpha=0.8, 0.9, 1.0$ $t(0,1);x=1$} \label{fig3.11}
\end{figure}
\end{example}
        	
\section{Conclusion}\label{sec-conclu}
 In this paper, \texttt{homotopy perturbation transform method} is successfully employed for numerical computation of initial valued autonomous system of time-fractional model of TFPDE with proportional delay, where we use the fractional derivative in Caputo sense. Three test problems are carried out in order to validate and illustrate the efficiency of the method. The proposed solutions agreed excellently with HPM \cite{SUE16} and DTM \cite{AG11}. The proposed approximate series solutions are obtained without any discretization, perturbation, or restrictive conditions, which converges very fast. However, the performed calculations show that the described method needs a small size of computation in comparison to HPM \cite{SUE16} and DTM \cite{AG11}. Small size of computation of this scheme is the strength of the scheme.

\section*{Acknowledgment}
\noindent The authors are grateful to the anonymous referees for their time, effort, and extensive comment(s) which improve the quality of the paper.
Pramod Kumar is thankful to Babasaheb Bhimrao Ambedkar University, Lucknow, INDIA for financial assistance to carry out the work.

\end{document}